\pdfoutput=1
\documentclass[a4paper,english]{amsart}

\usepackage[sc,osf]{mathpazo}
\linespread{1.05}
\DeclareTextFontCommand{\textsl}{\fontfamily{ppl}\fontshape{sl}\selectfont}
\usepackage[scaled=.95]{helvet} 
\usepackage[euler-digits]{eulervm}
\usepackage{microtype}

\usepackage{mathtools}
\usepackage{amssymb}
\usepackage{amsthm}
\usepackage[foot]{amsaddr}
\usepackage{ifthen}
\usepackage{nicefrac}
\usepackage{color}
\usepackage[
pdftitle={One Lie group to define them all},
pdfauthor={Annalisa Conversano, Marcello Mamino},
pdfkeywords={Keywords},
colorlinks=false]{hyperref}

\title{One Lie group to define them all}
\author[A.~Conversano] {\lsstyle Annalisa Conversano}  
\author[M.~Mamino]{\lsstyle Marcello Mamino}

 \address{Annalisa Conversano, Massey University Auckland, New Zealand} 
 \address{Marcello Mamino, Universit\`a di Pisa, Italy}


\date{\mydate{14}{vii}{2021}}

\makeatletter
\def\Hy@Warning#1{}
\makeatother

\makeindex

\makeatletter
\def\@setemails{%
\ifnum\theg@author > 1 
\mbox{{\itshape E-mail addresses}:\space}{\ttfamily\emails}. 
\else 
\mbox{{\itshape E-mail address}:\space}{\ttfamily\emails}. 
\fi%
}
\makeatother

\makeatletter
\def\ps@plain{\ps@empty
  \def\@oddfoot{\normalfont\normalsize \hfil\thepage\hfil}%
  \let\@evenfoot\@oddfoot}
\def\ps@firstpage{\ps@plain
  \def\@oddfoot{\normalfont\normalsize \hfil\thepage\hfil
     \global\topskip\normaltopskip}%
  \let\@evenfoot\@oddfoot
  \def\@oddhead{\@serieslogo\hss}%
  \let\@evenhead\@oddhead 
}
\def\ps@headings{\ps@empty
  \def\@evenhead{%
    \setTrue{runhead}%
    \normalfont\normalsize
    \rlap{\thepage}\hfil
    \textsc{\lsstyle\MakeLowercase{\shortauthors}\hfil}}%
  \def\@oddhead{%
    \setTrue{runhead}%
    \normalfont\normalsize \hfil
    \textsc{\lsstyle\MakeLowercase{\rightmark{}{}}}\hfil\llap{\thepage}}%
  \let\@mkboth\markboth
}
\makeatother
\pagestyle{headings}%

\makeatletter
\let\oldupchars\upchars@
\def\upchars@{\oldupchars\def\-{\U-}}
\makeatother

\def\mathshift{$}
\catcode`$=13
\def${\ifinner\expandafter\mathshift\else\expandafter\myshift\fi}
\def\myshift#1${\raisebox{0ex}[0ex][0ex]{\mathshift#1\mathshift}}
\AtBeginDocument{\catcode`$=13}

\makeatletter
\let\oldsection\section
\def\newsection#1{\oldsection{\lsstyle #1}}
\def\newsectionr#1{\oldsection*{\lsstyle #1}}
\def\section{\@ifstar\newsectionr\newsection}
\makeatother

\makeatletter
\newtheorem{ghost@theorem}{}[section]
\def\@maketheorem#1=#2;{
	\newtheorem{#1}[ghost@theorem]{#2}}
\def\maketheorem#1{
	\@for\@x:=#1\do{
		\expandafter\@maketheorem\@x;}}
\makeatother



\def\eqdef{\stackrel{\textsl{\tiny def}}{=}}

\def\Z{\mathbb Z}
\def\R{\mathbb R}

\def\SO{\operatorname{SO}}

\def\st{\;:\;}

\def\quotient#1#2{\mathchoice
	{#1/\raisebox{-.5ex}{$\mathsurround=0pt\displaystyle #2$}}
	{#1/\raisebox{-.5ex}{$\mathsurround=0pt\textstyle #2$}}
	{#1/\raisebox{-.3ex}{$\mathsurround=0pt\scriptstyle #2$}}
	{#1/\raisebox{-.1ex}{$\mathsurround=0pt\scriptscriptstyle #2$}}}

\def\MMfirstoftwo#1#2{#1}
\def\MMsecondoftwo#1#2{#2}

\def\MMendlist{\MMendlist}
\def\MMuniquetag{\MMuniquetag}

\def\checkempty#1{\MMdocheckempty#1\MMuniquetag\MMendlist}
\def\MMdocheckempty#1#2\MMendlist{\ifx#1\MMuniquetag\expandafter\MMfirstoftwo\else\expandafter\MMsecondoftwo\fi}

\let\n\oldstylenums

\def\mydate#1#2#3{\hbox{\n{#1}$\cdot${\scshape #2}$\cdot$\n{#3}}}
\def\new#1#{\MMdonew{#1}}
\def\MMdonew#1#2{\checkempty{#1}{\MMnewtwoparms[#2]{#2}}{\MMnewtwoparms#1{#2}}}
\def\MMnewtwoparms[#1]#2{\textsc{#2}\index{#1}}
\def\-{\nobreakdash-\hspace{0pt}}
\def\U-{\raise0.2ex\hbox{-}}
\def\url#1{\href{#1}{url\nobreakdash---\texttt{#1}}}

\def\arXiv#1{\href{http://arxiv.org/abs/#1}{\texttt{arXiv:#1}}}

\makeatletter
\def\eatspace#1{#1}
\def\myitem#1{\hfil\break\hbox to 0em{\hss#1. }\def\@currentlabel{#1}\eatspace}
\makeatother

\theoremstyle{plain}
\newtheorem*{theorem*}{Theorem}
\maketheorem{%
	theorem=Theorem,%
	lemma=Lemma,%
	proposition=Proposition,%
	corollary=Corollary,%
	fact=Fact%
}
\newtheorem{claim-toplemma}{Claim}
\theoremstyle{definition}
\maketheorem{%
	definition=Definition,%
	assumption=Assumption,%
	example=Example%
}
\theoremstyle{remark}
\maketheorem{%
	observation=Observation,%
	remark=Remark,%
	claim=Claim%
}

\def\SO{SO}

\newcommand{\book}[2]{{\scshape#1}, {\bf #2}}
\newcommand{\pre}[2]{{\scshape#1}, #2}
\newcommand{\publ}[6]
{{\scshape#1}, #2, {\itshape #3}, {\bf #4} (#5), pp.~#6.}

\begin{document}

\begin{abstract}
We produce a connected real Lie group that, as a first order
structure in the group language, interprets the real field expanded with a
predicate for the integers. Moreover, the domain of our interpretation is
definable in the group.
\end{abstract}

\maketitle
\thispagestyle{empty}
 
\section{Introduction}

\noindent The purpose of this note is to exhibit a connected Lie group
which is, from the point of view of model theory, as badly behaved as it
possibly can.
In particular, we produce a Lie group -- which happens to be connected, nilpotent and
non-compact -- that, seen as a first order
structure in the group language,
interprets the real field expanded with a predicate for the
integers~$(\R,+,\times,\Z)$.
This, in turn, can be considered a \textit{wild} structure from the point
of view of model theory, and, in particular, every Lie group is
interpretable in it.

Indeed, we claim a slightly stronger statement, namely that there is a
connected Lie group that {\it defines} the real field with a predicate for
the integers, and thus every Lie group. To clarify this second claim, we
must specify what is intended for a structure~$\mathcal{A}$ to define a
structure~$\mathcal{B}$. A structure is a collection of
sets: the domain, the relations, the functions in its signature.
Hence, it makes sense to say that~$\mathcal{B}$ is definable
in~$\mathcal{A}$ whenever the domain~$B$ of~$\mathcal{B}$ is a definable set in~$\mathcal{A}$,
and all relations and functions in the signature
of~$\mathcal{B}$ are definable sets in~$\mathcal{A}$. More generally, we
say that $\mathcal{B}$ is definable in~$\mathcal{A}$ when there is a
structure $\mathcal{B}'$ isomorphic to~$\mathcal{B}$ which is definable
in~$\mathcal{A}$ in the sense above.
In other words, $\mathcal{A}$ defines~$\mathcal{B}$ precisely when
$\mathcal{A}$ interprets~$\mathcal{B}$ in such a way that the domain of
the interpretation is a definable set in~$\mathcal{A}$.
Thus, for instance, we can say that
the group $\SO_2(\R)$ is definable in~$(\R,+,\cdot)$, meaning
that there is a set~$G$ definable in~$(\R,+,\cdot)$ and a
function~$\_\cdot\_\colon G\times G \to G$ such that~$(G,\cdot)$ is
isomorphic, as a first order structure (or, equivalently, as a group)
to~$\SO_2(\R)$.

We mention here that {\it definable} or {\it interpretable}, in this
work, always means definable or interpretable with parameters.


The immediate motivation for this note has been a question by
Antongiulio Fornasiero, about the possibility to interpret every connected Lie
group in a d-minimal structure (for results in this context
see~\cite{fo}), which is ruled out by our example.
More in general, there is a growing number of
classes of Lie groups known to enjoy nice model-theoretical properties, such as:
Nash groups~\cite{pillay88},
algebraic groups~\cite{pps2},
compact~\cite{nesin-pillay, nipI} or semisimple~\cite{pps1} Lie groups,
and covers of all the above~\cite{bm, hpp2}.  
At the same time, model theoretic methods are being
used to attack classical problems~\cite{bpp}.
Our result provides a negative example, suggesting that there are
non-compact
Lie groups that, despite being connected and nilpotent, are intractable for model theory. This contrasts with the
compact case, in fact every compact Lie group is isomorphic to a group definable
in the real field, and, in turn, the real field can be recovered from any
semisimple Lie group~\cite{pps1}.
To study the class of Lie groups up to first order definability in more
detail is a complicated and possibly interesting task; it is, however, beyond the scope of this
note: all we intend to say is that this class has a maximal element.
\vskip\baselineskip

\section{Construction of the group}

\noindent Recall that a real Lie group -- for short {\it Lie group} -- is a real analytic manifold equipped with a real
analytic group operation. Real manifolds are assumed to be second countable and
Hausdorff.

\begin{fact}\label{coll}
Identify the field~$\R$ with the subset~$\{0\}\times\R$ of~$\R^2$.
Then the field operations of~$\R$
can be defined using only the incidence graph of the straight lines
in~$\R^2$. More precisely, the field operations are first order definable
in the structure~$(\R^2,\textrm{coll})$ where $\textrm{coll}(p,q,r)$ denotes the
ternary collinearity relation.
\end{fact}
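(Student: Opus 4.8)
The plan is to recover the field operations from collinearity, which is the classical content of the coordinatization of affine/projective geometry, but carried out concretely on $\R^2$. The basic idea is that once we can speak about lines (each line being encoded by the collinearity of its points) and about parallelism and intersection, we can reconstruct addition and multiplication via the standard geometric constructions. So first I would observe that from $\textrm{coll}$ we can define the relation ``the line through $p,q$ is parallel to the line through $r,s$'': two lines are parallel exactly when they are equal or share no common point, and both equality and the existence of a common point are expressible using $\textrm{coll}$ (a point $t$ lies on the line through $p,q$ iff $\textrm{coll}(p,q,t)$, and two distinct lines meet iff there is a point collinear with each of the two defining pairs). With parallelism in hand, I can define, given a line and an external point, the unique parallel line through that point, and I can intersect lines.

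Next I would set up coordinates geometrically so that the embedded copy $\{0\}\times\R$ of $\R$ becomes the carrier of the field. The key observation is that $\R$ is here identified with a \emph{specific line} (the $y$-axis), and the three special points $0=(0,0)$, $1=(0,1)$, and a reference point off the axis are all parameter-definable. Addition on the line is then the usual parallelogram construction: to add two points $a,b$ on the $y$-axis, pick an auxiliary direction, translate $a$ along that direction and $b$ back, and read off $a+b$ as an intersection of suitable parallels. Since the construction uses only ``draw the line through two points'', ``draw the parallel to a given line through a given point'', and ``intersect two lines'', every step is expressible in $(\R^2,\textrm{coll})$, so the graph of addition $\{(a,b,c): a+b=c\}$ restricted to the axis is definable. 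Multiplication is handled by the analogous intercept (similar-triangles) construction, using the unit point $1$ to fix the scale: projecting via lines through $1$ and suitable parallels sends the product $a\cdot b$ to a definable intersection point.

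The main obstacle is not any single construction but making sure the geometric recipes are \emph{uniformly} and \emph{totally} definable as first-order formulas, including the degenerate cases. Concretely, I must check that each auxiliary line and intersection point used in the parallelogram and intercept constructions exists and is unique for all inputs, and that the boundary cases ($a=0$, $b=0$, $a=b$, points coinciding with the reference points) either fall out correctly or can be patched by a case split inside the formula. The cleanest way to discharge this is to verify that $\textrm{coll}$ defines the full affine structure: parallelism is definable, hence so is the group of translations and the ratios along a line, and standard results (e.g.\ the recovery of the coordinate field from an affine plane satisfying Desargues, which $\R^2$ does) guarantee that the resulting operations on the axis agree with the genuine field operations of $\R$. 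I would therefore phrase the proof as: (1) define parallelism and line-intersection from $\textrm{coll}$; (2) give the explicit addition formula via the parallelogram law and verify it matches $+$; (3) give the explicit multiplication formula via similar triangles anchored at $1$ and verify it matches $\times$; (4) remark that all cases, including degenerate ones, are covered by the first-order case distinctions.
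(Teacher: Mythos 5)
Your proposal is correct and takes essentially the same route as the paper: first make incidence, intersection, and parallelism of lines first-order expressible from $\textrm{coll}$, then recover addition and multiplication on the distinguished axis via the affine von Staudt constructions (the parallelogram construction for $+$ and the intercept/similar-triangles construction anchored at the unit point for $\times$, with parameters allowed). The paper presents these constructions only by a figure and classical references, leaving the explicit formulas and degenerate cases to the reader, so your extra attention to uniformity and boundary cases is merely a more detailed write-up of the same argument.
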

\begin{proof}
First observe that, working in~$(\R^2,\textrm{coll})$, we can identify
a straight line by a pair of different points, and we can tell whether
the straight lines identified by two pairs of points coincide, meet, or are
parallel, by means of first order formulas. We can thus implement constructions based on the notions of
incidence and parallelism.

It is an ancient observation that the {\it arithmetic of segments} can be
defined geometrically: geometrical constructions to this effect can be found, for
instance, in the
works of Descartes~\cite{descartes} and Hilbert~\cite[\S~15]{grundlagen}.
We need constructions, however, that rely solely on
incidence and parallelism. These are classically called {\it von Staudt
constructions}, with reference to~\cite[\S~19]{staudt}, despite the fact
that, formally, this work is
set in the context of projective geometry. The affine versions of the von Staudt
constructions are depicted in the figure below: the reader will work out the
details without any difficulty.\\
\def\prd{$x\cdot y$}\hbox to \textwidth{\hss
\begingroup%
  \makeatletter%
  \providecommand\color[2][]{%
    \errmessage{(Inkscape) Color is used for the text in Inkscape, but the package 'color.sty' is not loaded}%
    \renewcommand\color[2][]{}%
  }%
  \providecommand\transparent[1]{%
    \errmessage{(Inkscape) Transparency is used (non-zero) for the text in Inkscape, but the package 'transparent.sty' is not loaded}%
    \renewcommand\transparent[1]{}%
  }%
  \providecommand\rotatebox[2]{#2}%
  \newcommand*\fsize{\dimexpr\f@size pt\relax}%
  \newcommand*\lineheight[1]{\fontsize{\fsize}{#1\fsize}\selectfont}%
  \ifx\svgwidth\undefined%
    \setlength{\unitlength}{348.44812295bp}%
    \ifx\svgscale\undefined%
      \relax%
    \else%
      \setlength{\unitlength}{\unitlength * \real{\svgscale}}%
    \fi%
  \else%
    \setlength{\unitlength}{\svgwidth}%
  \fi%
  \global\let\svgwidth\undefined%
  \global\let\svgscale\undefined%
  \makeatother%
  \begin{picture}(1,0.23613647)%
    \lineheight{1}%
    \setlength\tabcolsep{0pt}%
    \put(0,0){\includegraphics[width=\unitlength,page=1]{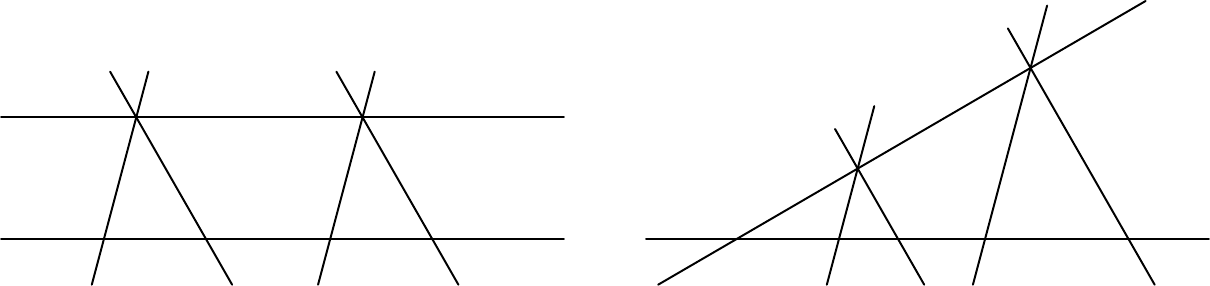}}%
    \put(0.08659,0.01387771){\color[rgb]{0,0,0}\makebox(0,0)[lt]{\lineheight{10}\smash{\begin{tabular}[t]{l}$0$\end{tabular}}}}%
    \put(0.18728573,0.01387771){\color[rgb]{0,0,0}\makebox(0,0)[lt]{\lineheight{10.00000095}\smash{\begin{tabular}[t]{l}$x$\end{tabular}}}}%
    \put(0.27497351,0.01387771){\color[rgb]{0,0,0}\makebox(0,0)[lt]{\lineheight{10.00000095}\smash{\begin{tabular}[t]{l}$y$\end{tabular}}}}%
    \put(0.3734972,0.01387771){\color[rgb]{0,0,0}\makebox(0,0)[lt]{\lineheight{10.00000095}\smash{\begin{tabular}[t]{l}$x+y$\end{tabular}}}}%
    \put(0.69304854,0.01387771){\color[rgb]{0,0,0}\makebox(0,0)[lt]{\lineheight{10.00000095}\smash{\begin{tabular}[t]{l}$1$\end{tabular}}}}%
    \put(0.75850723,0.01387771){\color[rgb]{0,0,0}\makebox(0,0)[lt]{\lineheight{10.00000095}\smash{\begin{tabular}[t]{l}$x$\end{tabular}}}}%
    \put(0.81460395,0.01387771){\color[rgb]{0,0,0}\makebox(0,0)[lt]{\lineheight{10.00000095}\smash{\begin{tabular}[t]{l}$y$\end{tabular}}}}%
    \put(0.94977866,0.01387771){\color[rgb]{0,0,0}\makebox(0,0)[lt]{\lineheight{10.00000095}\smash{\begin{tabular}[t]{l}\prd\end{tabular}}}}%
    \put(0.60162863,0.01387771){\color[rgb]{0,0,0}\makebox(0,0)[lt]{\lineheight{10}\smash{\begin{tabular}[t]{l}$0$\end{tabular}}}}%
  \end{picture}%
\endgroup%
\hss}
\end{proof}

\def\RZ{(\R,+,\cdot,\Z)}
\noindent Let $\RZ$ denote the first order structure
whose domain is the set of real numbers, with the field operations and a predicate for
the subset of the integers.
It is well known that the definable
sets in~$\RZ$ coincide with the projective sets (see, for instance,
\cite[exercise~37.6]{setheory}). Therefore we get immediately the following fact.

\begin{fact} \label{alldef}
All Lie groups are definable in~$\RZ$.
\end{fact}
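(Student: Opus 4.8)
The plan is to read off the conclusion directly from the displayed description of the definable sets: since the subsets of $\R^k$ definable in $\RZ$ are precisely the projective ones, it suffices to exhibit, for an arbitrary Lie group $G$ with multiplication $m$, an isomorphic copy $(G',m')$ whose underlying set $G'\subseteq\R^N$ and whose multiplication graph $\{(x,y,z)\st m'(x,y)=z\}\subseteq\R^{3N}$ are both projective. Recall from the introduction that to witness $G$ definable in $\RZ$ we are allowed to replace it by any isomorphic structure, so producing such a copy is exactly what is required.

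First I would transport $G$ into Euclidean space. A Lie group is, by definition, a second countable Hausdorff real analytic manifold of some finite dimension $n$; hence the Whitney embedding theorem provides a smooth embedding $\iota\colon G\hookrightarrow\R^N$ (with $N=2n+1$, say) whose image is a closed submanifold. I set $G'=\iota(G)$ and transport the operation by $m'(x,y)=\iota\bigl(m(\iota^{-1}(x),\iota^{-1}(y))\bigr)$. By construction $\iota\colon(G,m)\to(G',m')$ is a group isomorphism, so $(G',m')$ is an isomorphic copy of $G$ sitting inside $\R^N$.

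It then remains to check that $G'$ and the graph of $m'$ are projective. Since $\iota$ is a closed embedding, $G'$ is a closed subset of $\R^N$; closed sets are Borel, and Borel sets are projective, so $G'$ is projective. For the operation, note that $m'$ is continuous on $G'\times G'$ (being the conjugate of the continuous map $m$ by the homeomorphism $\iota$), while $G'\times G'$ is closed in $\R^{2N}$; a continuous map defined on a closed subset of $\R^{2N}$ has closed graph, so the graph of $m'$ is closed in $\R^{3N}$, hence again Borel and projective. Appealing to the projective${}={}$definable equivalence, both $G'$ and the graph of $m'$ are definable in $\RZ$, and therefore so is the group $(G',m')\cong G$. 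Inversion and the identity, should one wish to keep them in the signature, are handled identically: inversion is continuous with closed graph, and the identity is a single point.

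I do not expect a genuine obstacle here. Once the definable sets of $\RZ$ are identified with the projective sets, the entire content is the elementary observation that a Lie group sits as a closed subset of some $\R^N$ with continuous — hence closed-graph, hence projective — multiplication. The one point deserving a little care is that the embedding be a homeomorphism onto a \emph{closed} image, so that both the group and its operation land among the Borel (a fortiori projective) sets; this is precisely what Whitney's theorem guarantees, and the assumption of second countability recorded just above Fact~\ref{coll} is what makes it applicable to every Lie group, connected or not.
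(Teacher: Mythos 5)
Your proposal is correct and follows essentially the same route as the paper: Whitney's embedding theorem gives a closed copy of the group in~$\R^{2n+1}$, continuity of the transported multiplication makes its graph closed (hence projective), and the identification of $\RZ$-definable sets with projective sets finishes the argument. Your added care about the closed-graph step and the role of second countability merely makes explicit what the paper leaves implicit.
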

\begin{proof}
Given a Lie group~$G$ of dimension~$n$, by Whitney's embedding theorem,
there is an embedding of~$G$ in~$\R^{2n+1}$ as a closed
subset~$C$. The group operation of~$G$ induces an
operation~$\_\cdot\_:C\times C\to C$, which is continuous, hence a closed subset
of~$\R^{3(2n+1)}$. Therefore $C$ and~$\_\cdot\_$ are projective
sets, hence definable in~$\RZ$.
\end{proof}

\noindent We show now that one needs the full power of $\RZ$ to be
able to define all  connected Lie groups. Namely, there is a connected Lie group $G$ such that
$\RZ$ itself is definable in the group structure~$(G,\cdot)$.
 
\begin{theorem}\label{th-main}
There is a connected nilpotent Lie group that
interprets~$\RZ$. Moreover, there is a connected Lie group that
defines~$\RZ$.
\end{theorem}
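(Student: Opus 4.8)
The plan is to encode $\RZ$ in the commutator structure of a single connected nilpotent Lie group, using two \emph{independent} commutator pairings: a non-compact one that recovers the real field with honest real multiplication, and a compact one that pins down the predicate~$\Z$. Concretely, I would take the simply connected $2$-step nilpotent group $N$ whose Lie algebra has basis $x, y_1, y_2, z, w$ with $z, w$ central and the only nonzero brackets $[x, y_1] = z$ and $[x, y_2] = w$, and then set $G = N/\Lambda$ with $\Lambda = \exp(\Z w)$. Then $G$ is connected and nilpotent, $Z(G)\cong\R\times(\R/\Z)$ (the $z$- and the $w\bmod\Z$-directions), and $G/Z(G)\cong\R^3$. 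The commutator induces on $G/Z(G)$ two alternating pairings, $\omega_z = x\wedge y_1$ valued in $\R$ and $\omega_w = x\wedge y_2$ valued in $\R/\Z$. Such a pair cannot live on a $2$-dimensional quotient, where any two alternating forms are proportional; this is exactly why $\dim G/Z(G)=3$ is forced.

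First I would separate the two pairings definably. Fixing a parameter $p_0$ in the $y_2$-direction, the commutator $[p_0,g]$ has trivial $z$-part and $w$-part $-x_g\bmod\Z$, so as $g$ varies it sweeps out the compact factor $S=\{0\}\times(\R/\Z)$; hence $S$ is a definable subgroup. On $Z(G)/S\cong\R$ the map $(p,q)\mapsto[p,q]\bmod S$ is a definable, honestly $\R$-valued alternating form equal to $\omega_z$, whose radical is the definable $y_2$-line and which descends to a nondegenerate form on the quotient plane. Collinearity there is the definable condition $\omega_z(\,\cdot-\cdot,\,\cdot-\cdot\,)=0$, so Fact~\ref{coll} makes the real field definable on $Z(G)/S$; equivalently one runs the Heisenberg computation with $\omega_z$ directly, taking the Lagrangians $\{\omega_z(\cdot,u)=0\}$ and $\{\omega_z(\cdot,v)=0\}$ and reading off products as $\omega_z(\alpha,\beta)$. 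The multiplication obtained is the genuine real product, precisely because the $z$-direction was left non-compact.

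To define $\Z$ and to place the field on a definable \emph{subset}, I would fix $b_0$ in the $(y_1+y_2)$-direction and $v'$ in the $y_2$-direction. Since $x_{b_0}=0$, the map $a\mapsto[a,b_0]$ is a homomorphism with central image and $[a,b_0]=(x_a,\,x_a\bmod\Z)$; hence $D=\{[a,b_0]:a\in G\}$ is a definable subgroup isomorphic to $\R$, meeting $S$ trivially and mapping isomorphically onto $Z(G)/S$. Its group law is field addition, field multiplication transports along this isomorphism to a relation defined through $\omega_z$, and because $[a,v']=x_a\bmod\Z$ the set $\{[a,b_0]:[a,v']=e\}$ is exactly the copy of $\Z$ inside $D$. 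Thus $(D,\cdot,\times,\Z_D)\cong\RZ$ is definable in $G$, so a single connected nilpotent group already \emph{defines} $\RZ$, yielding both assertions at once; Fact~\ref{alldef} then promotes $G$ to one Lie group defining every Lie group.

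The step I expect to be the crux is keeping the multiplication honest while still capturing $\Z$: quotienting the whole centre would leave only $\omega\bmod\Z$, and then the product of two field elements is ill-defined, since representatives are pinned down only modulo $\Z$. The remedy is to compactify a \emph{second}, independent central direction rather than the first, which is what forces two commutators and $\dim G/Z(G)\ge 3$. The one genuinely technical point is the passage from ``interprets'' to ``defines'': I would check carefully that $D$ is a definable complement to $S$ (so that $\pi|_D$ is a definable isomorphism onto $Z(G)/S$) and that the transported field operations are first-order definable in $G$ itself, with no reference to the quotient sort.
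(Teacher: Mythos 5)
Your construction is correct, but it is genuinely different from the paper's, and it is worth spelling out how. The paper works with the $3$-dimensional group $G=\mathrm{H}_3(\R)/\Gamma$, whose center is a single circle: there the commutator pairing is only $\R/\Z$-valued, and the paper recovers \emph{exact} linear conditions anyway, by intersecting two centralizers with irrational ratio ($ab'-ba'\in\Z$ and $\sqrt2(ab'-ba')\in\Z$ force $ab'-ba'=0$), then defines collinearity on $G/Z(G)\cong\R^2$ from the uniformly definable family $L_{a,b}$ and invokes Fact~\ref{coll}; the predicate $\Z$ comes from a single centralizer $C([1,0,0])$. So your heuristic that a pair of independent pairings, hence $\dim G/Z(G)\geq 3$, is ``forced'' is overstated: the von Staudt route needs only incidence, not an honestly $\R$-valued product, and the paper's group has $\dim G/Z(G)=2$ (indeed the paper observes that total dimension $3$ is minimal, whereas your group has dimension $5$). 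What your extra central direction buys is substantial, though: your $D=\{[a,b_0]\st a\in G\}$, the graph of $t\mapsto(t,t\bmod\Z)$ inside $Z(G)\cong\R\times(\R/\Z)$, is a \emph{definable transversal} to the definable circle $S$, so the domain of the field lives inside $G$ from the start and no quotient sort is ever needed. I checked the key computations: $S=\{[p_0,g]\st g\in G\}$ and $D$ are commutator images of homomorphisms (the group is $2$-step nilpotent, so $a\mapsto[a,b]$ is a homomorphism), $D\cap S=\{e\}$, $DS=Z(G)$, and $\{[a,b_0]\st [a,v']=e\}$ is exactly the integer points of $D$; moreover multiplication on $D$ is definable Mal$'$cev-style without even appealing to Fact~\ref{coll}, since $[u,b_0]=d$ pins down $x_u$ exactly (not merely mod $\Z$), the condition $[v,Q_1]\in S$ pins down $x_v=0$, the $D$-component of $[P,v]$ pins down the $y_1$-coordinate of $v$, and then the $D$-component of $[u,v]$ is the genuine real product -- all first order with parameters, well defined independently of the choices of $u,v$. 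Consequently you get both halves of Theorem~\ref{th-main} from a single group, and in fact a slightly stronger statement than the paper proves: your group is connected \emph{nilpotent} and \emph{defines} $\RZ$, whereas the paper's defining group $G'=H'/\Gamma$ (obtained by adjoining an $\R^+$ diagonal factor and choosing coset representatives via a conjugation orbit) is solvable but not nilpotent. The trade-offs: the paper's construction is dimension-minimal and needs the irrationality trick plus the divisibility-by-$2$ condition to make the family $\mathcal{L}$ uniform; yours is larger but eliminates the interpretation-to-definition step and the von Staudt machinery entirely. The one place your sketch should be expanded in a final write-up is exactly the clause ``field multiplication transports along this isomorphism to a relation defined through $\omega_z$'': as written it leans on the interpreted sort $\quotient{Z(G)}{S}$, and one should instead display the explicit first-order definition on $D$ itself, as outlined above, to make the ``defines'' claim airtight.
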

\begin{proof}
First, we will construct the connected nilpotent Lie group~$G$ in which $\RZ$ is interpretable. After
that we will show how this group can be modified to obtain a group
$G'$, so that the domain~$\R$ is a definable set in~$G'$. 

Consider the following nilpotent group
\[
G = \quotient{\mathrm{H}_3\left(\R\right)}{\Gamma}
\]
Where $\mathrm{H}_3\left(\R\right)$ is the Heisenberg group
\[
\mathrm{H}_3\left(\R\right) =
\left\{\left(
\begin{array}{ccc}
1 & a & c \\
0 & 1 & b \\
0 & 0 & 1
\end{array}
\right) \st a,b,c\in\R \right\} < \mathrm{GL}_3\left(\R\right)
\]
and
\[
\Gamma =
\left\{\left(
\begin{array}{ccc}
1 & 0 & z \\
0 & 1 & 0 \\
0 & 0 & 1
\end{array}
\right) \st z\in\Z \right\} \triangleleft \mathrm{H}_3\left(\R\right)
\]
is a discrete subgroup of the center of~$\mathrm{H}_3$
\[
Z(\mathrm{H}_3) =
\left\{\left(
\begin{array}{ccc}
1 & 0 & c \\
0 & 1 & 0 \\
0 & 0 & 1
\end{array}
\right) \st c\in\R \right\}  
\]
(we choose a
particular~$\Gamma$; however, up to isomorphism, $G$ does not depend on
this choice).
For ease of notation, write $[a,b,c]$ to denote the class of the element
\[\left(
\begin{array}{ccc}
1 & a & c \\
0 & 1 & b \\
0 & 0 & 1
\end{array}
\right)
\]
of~$\mathrm{H}_3\left(\R\right)$ in the quotient.
One can check directly that $[a,b,c]=[a',b',c']$ if
and only if $a=a'$, $b=b'$, and~$c-c'\in\Z$. It follows that the centralizer
of $[a,b,c]$ in~$G$ is
\[
\mathrm{C}\left([a,b,c]\right) = \left\{[a',b',c'] \st ab'-ba'\in\Z\right\}.
\]
For any $a,b\in\R$ we can define the subgroup
\[
L_{a,b} =
\mathrm{C}\left([a,b,0]\right)
\cap
\mathrm{C}\left([\sqrt{2} a,\sqrt{2} b,0]\right)
\]
where $\sqrt{2}$ can be replaced by any irrational number, as this is enough to ensure
that
\[
L_{a,b}= \left\{[a',b',c'] \st ab'-ba' = 0 \right\}.
\]
In particular, the following are definable subgroups
of~$G$:
\begin{align*}
A &\eqdef L_{0,1} = \left\{[0,b,c] \st b,c\in\R\right\}\\
B &\eqdef L_{0,1} \cap \mathrm{C}\left([1,0,0]\right) = \left\{[0,b,c] \st b\in\Z,\;c\in\R\right\}
\end{align*}
Hence the following groups are interpretable in~$G$:
\begin{align*}
E &\eqdef \quotient{G}{Z(G)}
& R &\eqdef \quotient{A}{Z(G)}
& Z &\eqdef \quotient{B}{Z(G)}
\end{align*}
Clearly $E>R>Z$. Observe that two elements $[a,b,c]$ and~$[a',b',c']$
of~$G$ are equivalent modulo~$Z(G)$ if and only if $a=a'$ and~$b=b'$. It
follows that $E$, $R$, and~$Z$ are isomorphic respectively to~$\R^2$,
$\{0\}\times\R$, and~$\{0\}\times\Z$ through the map
$\iota\colon[a,b,c]\mapsto(a,b)\in\R^2$. These are the ingredients of our
interpretation.

The group~$R$ is the domain of the interpretation of~$\RZ$,
and $Z$ is the interpretation of~$\Z$.
To define the field operations, we will make use of Fact~\ref{coll}.
Let $\mathcal{L}$ denote the
set~$\left\{L_{a,b} \st (a,b)\in\R^2\setminus\{(0,0)\}\right\}$
of subgroups of~$G$. Clearly the
family~$\{\iota(L)\}_{L\in\mathcal{L}}$ spans all the
straight lines through~$(0,0)$, so, if the set~$\mathcal{L}$ happens to be
the image of a uniform family of definable subgroups, then we can define the collinearity relation over~$E\sim\R^2$
as follows
\[ \text{coll}(p,q,r) \;\; \stackrel{\textsl{\tiny def}}{\equiv} \;\; \exists
L\in\mathcal{L} \;\; p^{-1}q\in L\;
\wedge\; p^{-1}r \in L \]
then we conclude by Fact~\ref{coll}.
To write $\mathcal{L}$ as a uniform family, recall that the subgroups~$L_{a,b}$ are intersections of pairs of
centralizers, therefore, it suffices to find a definable predicate that
tells whether, given $g_1,g_2\in G\setminus Z(G)$, there are $a,b$ such
that $C(g_1)\cap C(g_2)=L_{a,b}$. Indeed, this happens if and only if
the group~$C(g_1)\cap C(g_2)$ is divisible by~$2$, i.e.\ for all~$x\in C(g_1)\cap C(g_2)$
there is~$y\in C(g_1)\cap C(g_2)$ such that~$x=y\cdot y$.

Now, to get a group in which $\RZ$ is definable, as opposed to
interpretable, we replace
$\mathrm{H}_3\left(\R\right)$ with the
group
\[
H' =
\left\{\left(
\begin{array}{ccc}
1 & a & c \\
0 & x & b \\
0 & 0 & 1
\end{array}
\right) \st a,b,c\in\R \wedge x\in\R^+\right\} < \mathrm{GL}_3\left(\R\right)
\]
Again $\Gamma$ is central in~$H'$, and we can construct
$G'$ as
\[
G' = \quotient{H'}{\Gamma}
\]
Now, using the notation~$[a,b,c,x]$ for elements of~$G'$, it is easy to
recover the group~$G$ as the product of the centralizers of~$[0,1,0,1]$
and~$[1,0,0,1]$. In fact
\begin{align*}
C\left([0,1,0,1]\right) &= \left\{[0,b,c,1] \st b,c\in\R \right\}\\
C\left([1,0,0,1]\right) &= \left\{[a,0,c,1] \st a,c\in\R \right\}\\
[0,b,c_1,1]\cdot[a,0,c_2,1] &= [a,b,c_1+c_2,1]
\end{align*}
Therefore we can carry out the construction of $E$, $R$,
and~$Z$ as before and get an interpretation of~$\RZ$. To turn this into an
actual definition, we only need a choice of representatives for the
elements of~$R=\quotient{A}{Z(G)}$. To this aim, let $O$
be the orbit of~$[0,1,0,1]$ under conjugation by elements
of~$C([0,0,0,2])$.
An easy computation shows that
\begin{align*}
C\left([0,0,0,2]\right) &= \left\{[0,0,c,x] \st c\in\R \wedge x\in\R^+\right\}\\
O &= \left\{[0,b,0,1] \st b\in\R^+\right\}
\end{align*}
Hence $O\cup O^{-1}\cup \left\{[0,0,0,1]\right\}$ intersects each
equivalence class of~$A$ modulo~$Z(G)$ in a single point.
\end{proof}



\section{Additional remarks}

\noindent The group $G$ of Theorem~\ref{th-main} has
minimal dimension, since all connected Lie groups of dimension up to~$2$ are definable in the real field:
up to Lie isomorphism, connected 1-dimensional groups are $\SO_2(\R)$ and
$\R$, 2-dimensional groups are $\R\times\R$, $\R^{> 0} \ltimes \R$, $\SO_2(\R)
\times \R$ and $\SO_2(\R)\times\SO_2(\R)$ (see for instance
\cite[p.36]{liebook}). We don't know whether $G'$ could be replaced by a
group of dimension~$3$.

The group $G$, defined as the quotient of a connected real algebraic group by a discrete subgroup, could also be obtained as the quotient of a definably connected semialgebraic group by a definably connected $\bigvee$-definable subgroup (see \cite[Example 5.9]{survey}).

The group $G$ is
not linear (for instance, by a Theorem of Got\^o \cite[Theorem 5]{goto} the derived
subgroup of a connected solvable linear Lie group needs to intersect
trivially any maximal compact subgroup, but the derived subgroup
of~$G$
\textit{coincides} with a maximal compact subgroup).
In a private communication, Ya'acov Peterzil
observed that, by a modification of~\cite[example on p.~5]{pps3},
there is a linear group interpreting~$(\mathbb{C},+,\cdot,\Z)$---in fact, it
suffices  to repeat the same construction of the example with $\R$ replaced
by~$\mathbb{C}$. 
This raises the question of whether there is a linear
group interpreting~$(\R,+,\cdot,\Z)$.

\section*{Acknowledgements}
\noindent We thank Chris Miller for the reference
\cite{setheory}
and Ya'acov Peterzil for fruitful discussions and
suggestions.

\end{document}